\newcommand{\ignore}[1]{}
\newtheorem{dummy}{Dummy}
\newtheorem{theorem}[dummy]{Theorem}
\newtheorem{proposition}[dummy]{Proposition}
\newtheorem{corollary}[dummy]{Corollary}
\theoremstyle{definition}
\newtheorem{remark}[dummy]{Remark}
\newcommand{\wh}{\widehat}
\keywords{Forms of higher degree, diagonal forms, function fields, $p$-adic curves, $u$-invariant.}
\subjclass[2000]{Primary: 11E76}
\author{S. Pumpl\"un}
\email{susanne.pumpluen@nottingham.ac.uk}
\address{School of Mathematical Sciences\\
University of Nottingham\\
University Park\\
Nottingham NG7 2RD\\
United Kingdom
 }
\date{19.3.2018}
\begin{document}

\title[Diagonal forms of higher degree]{Diagonal forms of higher degree over function fields of $p$-adic curves}

\maketitle

\begin{abstract}
We investigate diagonal forms of degree $d$
over the function field $F$ of a smooth projective $p$-adic curve: if a form is isotropic over the
completion of $F$ with respect to each discrete valuation of $F$,
then it is isotropic over certain fields $F_U$, $F_P$ and $F_p$.
These fields appear naturally when applying the methodology of patching; $F$ is the inverse limit of the finite inverse
system of fields $\{F_U,F_P,F_p\}$.
Our observations complement some known bounds on the higher $u$-invariant of diagonal forms of degree $d$.

We only consider diagonal forms of degree $d$ over fields of characteristic not dividing $d!$.
\end{abstract}

\section*{Introduction}

 The fact that Springer's Theorem holds  for diagonal forms of higher degree over fields of characteristic not dividing $d!$ \cite{Mo}
guarantees that on occasion diagonal forms of higher degree defined over function fields behave similarly to quadratic forms.
For a survey on the behaviour of (diagonal) forms of higher degree
in general the reader is referred to \cite{Pu}.

In this note we consider diagonal forms of degree $d$ over function fields $F = K(X)$ where
 $X$ is a smooth, projective, geometrically integral curve
over $K$ and $K$ is the fraction field of a complete discrete valuation ring
 with a residue field $k$ of characteristic not dividing $d!$.
Let $v$ be a rank one discrete valuation of $F$, and $F_v$ the completion of $F$ with respect to $v$.
It was shown by Colliot-Th\'el\`ene,  Parimala and Suresh  \cite[Theorem 3.1]{CT-P-S}
that a quadratic form which is isotropic over  $F_v$
for each $v$ is already isotropic over $F$,  using the methodology
 of patching developed by Habater and Hartmann \cite{H-H}, i.e. viewing $F$ as  the inverse limit of a finite inverse
system of certain fields $\{F_U,F_P,F_p\}$.

Given a nondegenerate diagonal form $\varphi$  over $F$ of degree $d>2$ and dimension $>2$,
 it is not clear, however, whether the isotropy of  $\varphi$ over $F_v$
for each $v$ implies that $\varphi$ is isotropic.

Our main result proves that the isotropy of a nondegenerate diagonal form $\varphi$ over $F_v$
for each $v$ implies that  at least over the field extensions $F_U$, $F_P$ and $F_p$ of $F$, $\varphi$ is isotropic as well
(Theorem \ref{thm:21}). These fields depend on the choice of the form
$\varphi=\langle a_1,\dots,a_n\rangle$, more precisely on the choice of the regular proper model
 ${\mathcal X}$ (over the complete discrete valuation ring  $A$) of the curve $X$ over $K$, which depends on $\varphi$:
  ${\mathcal X}$  is selected such  that there exists a reduced divisor
$D$ with strict normal crossings, which contains both the support of the divisor of all the entries $a_i$,
$1\leq i\leq n$,
and the components of the special fibre of $X/A$. Since nondegenerate diagonal forms of degree $d\geq 3$
have finite automorphism groups  \cite[p.~137]{H}, we are not able to apply
 \cite[Theorem 3.7]{H-H-K} to conclude that the isotropy of $\varphi$ over the $F_U$'s and $F_P$'s implies
 that $\varphi$ is also isotropic over $F$, however. This is only possible for $d=2$.

After collecting the terminology and some basic results in Section \ref{sec:prel}, in particular defining
diagonal $u$-invariants of degree $d$ over $k$,
we consider diagonal forms of higher degree over valued fields in Section \ref{sec:valued}
and then study diagonal forms of higher degree over function fields of $p$-adic curves
 using some of the ideas of \cite{CT-P-S}   in Section \ref{sec:Parimala}.
Recall that a $p$-adic field is a finite field extension of $\mathbb{Q}_p.$

 As a consequence of Springer's Theorem for diagonal forms, any diagonal form of degree $d$ and dimension $>d^3+1$  over
a function field in one variable $F=K(t)$, where $K$ is a $p$-adic field with residue field $k$,
${\rm char} (k) \nmid d!$,
 is isotropic over $F_v$ for every discrete valuation $v$
with residue field either a function field in one variable over $k$ or a finite extension of $K$.
Moreover, it is isotropic over $F_U$ for each reduced, irreducible component $U\subset Y$ of the
complement of $S$ in the special fibre $Y= {\mathcal X}\times_Ak$  of $X/A$,
 and  isotropic over  $F_P$ for each $P\in S$ (Corollary \ref{cor:5}), and thus isotropic over
$F_p$ for each $p=(U,P)$. Here, $S$ is the inverse image
 under a finite $A$-morphism $f: {\mathcal X}\to\mathbb{P}_A^1$ of the point at infinity of the special fibre $\mathbb{P}_k^1$.

%%%%%%%%%%%%%%%%%%%%%%%%%%%%%%%%%%%%%%%%%%%%%%%%%%%%%%%%%%%%%%%%%%%%%%%%%%%%%%%%%%%%%%%%%
%
% Preliminaries
%
%%%%%%%%%%%%%%%%%%%%%%%%%%%%%%%%%%%%%%%%%%%%%%%%%%%%%%%%%%%%%%%%%%%%%%%%%%%%%%%%%%%%%%%%%
\section{Preliminaries} \label{sec:prel}

Let $k$ be a field such that ${\rm char}( k)$ does not divide $d!$.

\subsection{Forms of higher degree}

Let $V$ be a finite-dimensional vector space over $k$ of dimension $n$.
A $d$-{\it linear form} over $k$ is a $k$-multilinear map $\theta : V \times
\dots \times V \to k$ ($d$-copies)  which is {\it symmetric}, i.e. $\theta (v_1,
\dots, v_d)$ is invariant under all permutations of its variables.
A {\it form of degree $d$} over $k$  (and of dimension $n$) is a map $\varphi:V\to k$
 such that $\varphi(a v)=a^d\varphi(v)$ for all $a\in k$, $v\in V$ and such that the map $\theta : V \times
\dots \times V \to k$ defined by
 $$\theta(v_1,\dots,v_d)=\frac{1}{d!}\sum_{1\leq i_1< \dots<i_l\leq d}(-1)^{d-l}\varphi(v_{i_1}+ \dots +v_{i_l})$$
($1\leq l\leq d$) is a $d$-linear form over $k$.
By fixing a basis $\{e_1,\dots,e_n\}$ of $V$, any form $\varphi$ of degree $d$ can be viewed as a homogeneous polynomial
of degree $d$ in $n={\rm dim}\, V$ variables $x_1,\dots,x_n$
 via $\varphi(x_1,\dots,x_n)=\varphi(x_1e_1+\dots+x_ne_n)$ and,
vice versa, any homogeneous polynomial of degree $d$ in $n$ variables over $k$ is a form of degree $d$ and dimension $n$
over $k$. Any $d$-linear form $\theta : V \times
\dots \times V \to k$ induces a form $\varphi: V\to k$ of degree $d$ via
$\varphi (v)=\theta(v,\dots,v)$.
 We can hence identify $d$-linear forms and forms of degree $d$.

Two $d$-linear spaces $(V_i,\theta_i)$, $i=1,2$, are {\it isomorphic} (written
$(V_1,\theta_1)\cong (V_2,\theta_2)$ or just $\theta_1\cong\theta_2$) if there exists a bijective linear map
$f:V_1\to V_2$ such that $\theta_2(f(v_1),\dots,f(v_d))=\theta_1(v_1,\dots,v_d)$ for all $v_1,\dots,v_d\in V_1.$
A $d$-linear space $(V,\theta)$ (or the $d$-linear form $\theta$) is
 {\it nondegenerate} if $v = 0$ is the only vector such
that $\theta (v, v_{2}, \dots, v_d) = 0$ for all $v_i \in V$. A form of degree $d$ is
called {\it nondegenerate} if its associated $d$-linear form
is nondegenerate. A form $\varphi$ over $k$ is called {\it anisotropic}, if it does not have any non-trivial zeroes,
otherwise it is called {\it isotropic}.

The {\it orthogonal sum} $(V_1,\theta_1)\perp (V_2,\theta_2)$ of two $d$-linear spaces $(V_i,\theta_i)$, $i=1,2$, is
 the $k$-vector space $V_1\oplus V_2$ together with the $d$-linear form
$$(\theta_1 \perp\theta_2)(u_1+v_1,\dots,u_d+v_d)=\theta_1(u_1,\dots,u_d)+\theta_2(v_1,\dots,v_d)$$
($u_i\in V_1$, $v_i\in V_2$) \cite{H-P}.

A $d$-linear space $(V,\theta)$ is
called {\it decomposable} if $(V,\theta)\cong (V,\theta_1)\perp (V,\theta_2)$
for two non-zero $d$-linear spaces $(V,\theta_i)$, $i=1,2$.
 If $\varphi$ is represented by the homogeneous polynomial $a_1x_1^d+\ldots +a_mx_m^d$ ($a_i\in k^\times$) we use the notation $\varphi=\langle  a_1,\ldots
,a_n\rangle  $ and call $\varphi$ {\it diagonal}.
A diagonal form $\varphi=\langle  a_1,\ldots,a_n\rangle  $ over $k$ is nondegenerate if and only if $a_i\in k^\times$
for all $1\leq i\leq n$.

If $d\geq 3$, $a_i,b_j\in k^\times$, then
$\langle  a_1,\ldots ,a_n\rangle\cong \langle  b_1,\ldots ,b_n\rangle$
 if and only if there is a permutation $\pi\in S_n$ such that
 $\langle b_i \rangle\cong \langle a_{\pi (i)}\rangle$ for every $i$.
This is a special case of \cite[Theorem 2.3]{H}.

Note that for quadratic forms ($d=2$), the automorphism group of $\varphi$ is infinite, whereas
for $d\geq3$, the automorphism group of  $\varphi$ usually is finite,  for instance if $\varphi$ is
is nonsingular in the sense of algebraic geometry \cite{S}.
In particular, nondegenerate diagonal forms of degree $d\geq 3$ have finite automorphism groups  \cite[p.~137]{H},
which creates a problem when trying to imitate patching arguments as it is not possible
to apply \cite[Theorem 3.7]{H-H-K}.

\subsection{Higher degree $u$-invariants}

The {\it
$u$-invariant (of degree $d$)} of $k$ is defined as $u(d, k) =
\sup \{ \dim_k \varphi \}$, where $\varphi$ ranges over all the anisotropic forms
of degree $d$ over $k$.
The {\it diagonal $u$-invariant (of degree $d$)} of $k$  is defined as $u_{diag}(d, k) = \sup \{ \dim \varphi \}$, where
$\varphi$ ranges over all the anisotropic diagonal forms over $k$.

Thus the diagonal $u$-invariant
$u_{diag}(d, k)$ is the smallest integer $n$ such that all diagonal forms of degree $d$ over
$k$ of dimension greater than $n$ are isotropic,
 and the $u$-invariant $u(d, k)$ is the smallest integer $n$ such that all forms of degree $d$ over
$k$ of dimension greater than $n$ are isotropic. Obviously, $u_{diag}(d, k) \leq u (d, k).$
 If $u = u(d, k)$ then each anisotropic form of degree $d$ over $k$ of
dimension $u$ is universal. If $u = u_{diag}(d, k)$ then each diagonal anisotropic form of degree $d$ over $k$ of
dimension $u$ is universal. We have
 $$u_{diag}(d, k) \leq \min \{ n \, \vert \,{\rm all}\,{\rm forms}\,{\rm of}\,{\rm degree}\, d \,{\rm over}\, k \, {\rm of}\,
  {\rm dimension}\, \geq n \,{\rm are}\,{\rm universal}\}$$
 with the understanding that the ``minimum'' of an empty set of integers
  is  $\infty$, cf. \cite{Pu}. For $d=2$, $u_{diag}(d, k)= u (d, k)$ is the $u$-invariant of quadratic forms.

For an algebraically closed field $k$, $\vert k^{\times} /k^{\times d}\vert = 1$ and hence
$u_{diag}(d, k)=u(d, k) = 1.$
For a formally real field $k$, the diagonal $u$-invariant is infinite  for even $d$:
since $-1\not\in \sum k^2$, also $-1\not\in \sum k^d$ for any  even $d$. Thus the form $m\times \langle  1\rangle
$ of degree $d$
 is anisotropic  for each integer $m$, implying
$u_{diag}(d,k)=u(d,k)=\infty.$

 The {\it strong diagonal} $u$-invariant of degree $d$ of $k$, denoted $u_{diag,\, s}(d, k)$, is the smallest real
number $n$ such that
\begin{enumerate}
\item every finite field extension $E/k$ satisfies $u_{diag}(d, E)\leq n$, and
\item every finitely generated field extension $E/k$ of transcendence degree one satisfies $u_{diag}(d, E)\leq dn$.
\end{enumerate}
If these $u$-invariants are arbitrarily large, put $u_{diag,\, s}(d, k)=\infty.$

Analogously as observed in \cite{H-H-K} for $d=2$,  $u_{diag,\, s}(d, k)\leq n$ if and only if
every finitely generated field extension $E/k$ of transcendence degree $l\geq 1$ satisfies $u_{diag}(d, k)\leq d^ln$.
Thus if $u_{diag,\, s}(d, k)$ is finite, it is at least 1 and lies in $\frac{1}{d}\mathbb{N}$.

\subsection{$C_r^0$ fields}
 Let $r\geq 1$ be an integer.
A field $F$ is a \emph{$C_r$-field} if for all $d \ge 1$ and $n > d^r$, every homogeneous form of
degree $d$ in $n$ variables over $F$ has a non-trivial solution in $F$.  In particular,
then $F$ satisfies $u(d,F) \le d^r$.  Moreover, every finite extension of $F$ is  a $C_r$-field,
and every one-variable function field over $F$  a $C_{r+1}$-field \cite[II.4.5]{Ser}. Hence
  $u_{diag,\, s}(d,F) \le d^r$  for a $C_r$-field $F$.

  A field $F$ is a \emph{$C_r^0$-field} if the following  holds:
For any finite field extension $F'$ of $F$ and any integers $d\geq 1$ and $n > d^r$, for any homogeneous
form over $F'$ of degree $d$ in $n$ variables, the greatest common divisor of the degrees of finite field extensions $F''/F'$
over which the form acquires a nontrivial zero is one.
This amounts to requiring that the $F'$-hypersurface defined by the form has a
zero-cycle of degree 1 over $F'$.

Assume ${\rm char}(F) = 0$. For each prime $l$, let $F_l$ be the fixed field of a pro-$l$-Sylow subgroup of
the absolute Galois group of $F$. Any finite subextension of $F_l/F$ is of degree coprime to $l$.
The field $F$ is $C_r^0$ if and only if each of the fields $F_l$ is $C_r$.
A finite field extension of a $C_r^0$-field is $C_r^0$.
If $F$ is $C_r^0$ then a function field $E = F(x_1,\dots,x_s)$
in $s$ indeterminates $x_1,\dots,x_s$ over $F$ is $C_{r+s}^0$  \cite[2.1]{CT-P-S}.

It is not known if
$p$-adic fields have the $C^0_2$-property.

\begin{remark}
Assume that $p$-adic fields have the $C^0_2$-property.
Let $K(X)$ be any function field of transcendence degree $r$ over a $p$-adic field $K$ (here we do not need to assume $p \not=2, 3$).
 Suppose that there is $\ell\not=2$ such that there exists a finite
subextension of $K_\ell(X)/K(X)$  of degree 2.
Then any cubic form over $K(X)$ in strictly more than
$3^{2+r}$ variables has a nontrivial zero:
If the $p$-adic field $K$ is $C_3^0$, then the function field $E=K(X)$ in $r$ variables over $K$ is $C_{3+r}^0$
 \cite[Lemma 2.1]{CT-P-S}.
Thus a cubic form over $E=K(X)$ in strictly more than $3^{2+r}$ has a nontrivial zero in each
of the fields $K_\ell(X)$, $l$ a prime, hence in a finite extension of $K(X)$ of degree coprime to $\ell$, for each $\ell$ prime.
Pick $\ell\not=2$, then $[K_\ell(X):K(X)]$ is even.
Moreover, pick $l\not=2$ such that there exists a finite
subextension of $E_\ell/K(X)$  of degree 2
then the cubic form has a zero over it.
 By Springer's Theorem for cubic forms and their behaviour under quadratic field extensions
 \cite[VII]{La}, thus the cubic form has a nontrivial zero in $K(X)$.
 This is the analogue of \cite[Proposition 2.2]{CT-P-S}.
\end{remark}

\section{Diagonal forms over Henselian valued fields}\label{sec:valued}

\subsection{} Let $K$ be a  valued field with valuation $v$, valuation ring  $R$ and maximal ideal
$m$.  Let $\Gamma$ be the value group. Assume that $d!$ is not divisible by the characteristic of the residue field $k=R/m$.
For $u\in R$, denote by $\bar u$ the image of $u$ in $k $.
For a polynomial $f\in R[X]$, $f=a_nx^n+\dots+a_1x+a_0$, define the polynomial
$\overline{f}= \bar a_nx^n+\dots+ \bar a_1x+\bar a_0$ over $k$.
If $\varphi=\langle a_1,\dots,a_n\rangle$ is a nondegenerate diagonal form with entries $a_i\in R$, define the
diagonal form
$\overline{\varphi}=\langle \bar a_1,\dots,\bar a_n\rangle$ over $k$.
$\varphi$  is called a {\it unit form}, if $\overline{\varphi}$ is nondegenerate.
Choose a set $\{ \pi_{\gamma}\in R \; \vert
\; \gamma \in I \}$ such that the values of the $ \pi_{\gamma}$'s  represent the distinct cosets in $\Gamma
/d \Gamma$. We may decompose a diagonal form $\varphi$ as $\varphi=\perp\varphi'_\gamma$ by taking $\varphi'_\gamma$ to be the
diagonal form whose entries comprise all $a_i$ with $v(a_i)=v(\pi_\gamma)\, {\rm mod}\, d\,\Gamma$.
By altering the slots by $d$-powers if necessary, we may then write $\varphi'_\gamma=\pi_\gamma\varphi_\gamma$
with each $\varphi_\gamma$ a diagonal unit form.  There are only finitely many non-trivial $\varphi_\gamma$ \cite{Mo}.
If $\Gamma=\mathbb{Z}$, the set $\{ \pi_{\gamma} \; \vert\; \gamma \in I \}$ can be chosen to be
$\{ \pi^{i} \; \vert\; i=0,\dots,d-1 \}$ and $|\Gamma/d\Gamma|=d$ is finite.

If $R$ satisfies Hensel's Lemma then $(K, v)$ is called a {\it Henselian valued field} and
$R$ a {\it Henselian valuation ring}. Every complete discretely valued field is Henselian.

Let $\varphi$ be a diagonal form over a Henselian valued field $(K, v)$. Write $\varphi=\pi_1\varphi_1\perp
\dots\perp \pi_r\varphi_r$ with each $\varphi_i$ a diagonal unit form and the $\pi_i$ having distinct values in
$\Gamma/d \Gamma$. Then $\varphi$ is isotropic if and only if some $\overline{\varphi_i}$ is isotropic \cite[Proposition 3.1]{Mo}.
This is because for a diagonal unit form $\varphi$ over a Henselian valued field  $(K, v)$,
 $\varphi$ is isotropic if and only if $\overline{\varphi}$ is isotropic  \cite[Lemma 2.3]{Mo}.

\begin{theorem} (\cite{Mo} or \cite[Theorem 4, Corollary 2]{Pu}) \label{thm:4}
Suppose that  ${\rm char} (k) \nmid d!$.
\\ (i) Let $(K, v)$ be a  Henselian  valued field. Then
$u_{diag}(d, K)= |\Gamma /d \Gamma| \,u_{diag}(d,k).$
\\ (ii)
 Let $(K, v)$ be a Henselian valued field.
 If every diagonal form of degree $d$ of dimension $n+1$ over $k$ is isotropic, then every diagonal form of degree $d$
 and dimension $dn+1$ over $K$ is isotropic. If $k$ has an anisotropic form of degree $d$ and dimension $n$, then $K$
 has an anisotropic form of degree $d$ and dimension $dn$.
\\ (iii) Let $K$ be a discretely valued field. Then
 $u_{diag}(d, K)\geq d \, u_{diag}(d, k).$
 \\ (iv)
Let $F$ be a field extension of finite type over $k$ of transcendence degree $n$. Then
$u_{diag}(d, F)\geq d^n u_{diag}(d, k^{\prime})$
for a suitable finite field extension $k^{\prime}/k$.
\\ The  (in)equalities in (i), (iii), (iv) also hold when the values are infinite.
\end{theorem}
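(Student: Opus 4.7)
The unifying engine for all four parts is the Henselian decomposition and isotropy dichotomy recalled just above the statement: for any diagonal form $\varphi$ over $(K,v)$ one can write $\varphi = \pi_1\varphi_1 \perp \cdots \perp \pi_r\varphi_r$ with $r \leq |\Gamma/d\Gamma|$, each $\varphi_i$ a diagonal unit form, and the $\pi_i$ having pairwise distinct values modulo $d\Gamma$; when $K$ is Henselian, $\varphi$ is isotropic if and only if some residue $\overline{\varphi_i}$ is isotropic over $k$.

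For (i), the upper bound is immediate from the pigeonhole principle applied to the decomposition: if $\dim \varphi > |\Gamma/d\Gamma| \cdot u_{diag}(d,k)$ then $\sum_i \dim \varphi_i = \dim \varphi$ forces some $\dim \overline{\varphi_i} > u_{diag}(d,k)$, so $\overline{\varphi_i}$ is isotropic and hence so is $\varphi$. For the matching lower bound I would take an anisotropic $\psi = \langle \bar a_1, \ldots, \bar a_m\rangle$ over $k$ with $m = u_{diag}(d,k)$, lift the $\bar a_i$ to units $a_i \in R^\times$ to obtain a unit form $\tilde\psi$, and assemble
$$\Phi = \perp_{\gamma \in I} \pi_\gamma \tilde\psi.$$
The summands of $\Phi$ share the common residue $\psi$ but are scaled by elements with pairwise distinct values in $\Gamma/d\Gamma$, so the criterion makes $\Phi$ anisotropic of dimension $|\Gamma/d\Gamma| \cdot u_{diag}(d,k)$. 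Part (ii) is the same pigeonhole specialized to decompositions with at most $d$ terms (as happens in the discretely valued setting): a form of dimension $dn+1$ then admits a summand of dimension $\geq n+1$, which is isotropic by hypothesis.

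For (iii), the valuation is discrete but $K$ need not be Henselian, so I pass to the Henselization $K^h$, whose value group and residue field coincide with those of $K$. Lifting an anisotropic $\psi$ over $k$ to units $a_1, \ldots, a_m \in R \subset K$ and picking a uniformizer $\pi \in R$, the $K$-form
$$\Phi = \langle a_1, \ldots, a_m\rangle \perp \pi\langle a_1, \ldots, a_m\rangle \perp \cdots \perp \pi^{d-1}\langle a_1, \ldots, a_m\rangle$$
is anisotropic over $K^h$ by (i); since $K \subseteq K^h$, it is anisotropic over $K$ as well.

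Part (iv) I would prove by induction on $n$: for $n=0$ simply take $k'=F$. For $n\geq 1$, I choose a projective normal model $X$ of $F/k$ and a prime divisor $D \subset X$, yielding a discrete valuation of $F$ whose residue field $\kappa = k(D)$ is finitely generated over $k$ of transcendence degree $n-1$. Part (iii) gives $u_{diag}(d, F) \geq d \cdot u_{diag}(d, \kappa)$, and the induction hypothesis applied to $\kappa$ produces a finite extension $k'/k$ with $u_{diag}(d, \kappa) \geq d^{n-1} u_{diag}(d, k')$. The only step requiring real care is locating this chain of geometric valuations, which rests on the standard existence of prime divisors on every projective variety over $k$. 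The parenthetical ``infinite values'' assertion follows by allowing $m$ to be taken arbitrarily large in the constructions above.
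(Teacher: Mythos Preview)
The paper does not supply its own proof of this theorem: it is quoted verbatim from Morandi \cite{Mo} and Pumpl\"un \cite{Pu}, and the surrounding text only remarks that the case $d=2$ recovers Springer's theorem before moving on to applications. So there is nothing in the paper to compare against; one can only ask whether your argument is correct and matches the cited sources.

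Your argument is essentially the standard one and is correct. The pigeonhole for the upper bound in (i), the explicit lift-and-scale construction for the lower bound, the passage to the Henselization in (iii), and the divisorial-valuation induction in (iv) are exactly the ingredients in Morandi's and Pumpl\"un's proofs. Two small remarks. First, you treat only the first assertion of (ii) explicitly; the second assertion (lifting an anisotropic $n$-dimensional form over $k$ to an anisotropic $dn$-dimensional form over $K$) is of course the lower-bound construction you already gave in (i), specialised to $|\Gamma/d\Gamma|=d$, and it would be worth saying so. Second, you rightly flag that (ii) as stated needs $|\Gamma/d\Gamma|=d$, i.e.\ the discrete case, for the factor $d$ to appear; the paper's phrasing ``Henselian valued field'' in (ii) is slightly loose, and the discrete hypothesis is indeed what is used in the cited references. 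Your handling of the infinite case (taking $m$ arbitrarily large, or infinitely many coset representatives when $|\Gamma/d\Gamma|=\infty$) is also the intended one.
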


For $d=2$, (ii) is Springer's Theorem for quadratic forms over Henselian valued fields \cite{Sp}.
 Springer's Theorem does not hold for non-diagonal forms of higher degree than 2 \cite[2.7]{Mo}.
 Theorem \ref{thm:4} is a major ingredient in our proofs, for instance we can show:

\begin{proposition} \label{le:7}
Let $A$ be a discrete valuation ring with fraction field $K$ and residue field $k$
such that ${\rm char} (k) \nmid d!$.
\\ (i) $u_{diag}(d,K)\geq d \, u_{diag}(d,k)$ and $u_{diag,\, s}(d,K)\geq d\, u_{diag,\, s}(d,k)$.
\\ (ii) If $A$ is Henselian then $u_{diag}(d,K)= d \, u_{diag}(d,k)$.
\\ (iii) If $A$ is a complete discrete valuation ring then every finite extension of $K$ has diagonal $u$-invariant at most
$d\,u_{diag}(d,k)$.
\end{proposition}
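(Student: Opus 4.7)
The plan is to derive all three parts from Theorem \ref{thm:4}, with part (ii) being essentially immediate and parts (i) and (iii) following by a residue-field/lifting argument.

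For part (ii), a Henselian DVR has value group $\Gamma = \mathbb{Z}$, so $|\Gamma/d\Gamma| = d$, and Theorem \ref{thm:4}(i) yields $u_{diag}(d,K) = d \cdot u_{diag}(d,k)$ at once.

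For part (i), the bound $u_{diag}(d,K) \geq d \cdot u_{diag}(d,k)$ is literally Theorem \ref{thm:4}(iii). For the strong version, I would use the reformulation noted just before the $C_r^0$-subsection: $u_{diag,s}(d,k) \leq n$ if and only if $u_{diag}(d,E) \leq d^{\ell} n$ for every finitely generated extension $E/k$ of transcendence degree $\ell \geq 0$. So fix any $r < u_{diag,s}(d,k)$; then some finitely generated $E/k$ of transcendence degree $\ell$ satisfies $u_{diag}(d,E) > d^\ell r$. The core of the argument is to lift $E$ to a finitely generated $E'/K$ of the same transcendence degree carrying a discrete valuation $w$ which restricts to $v$ and has residue field exactly $E$. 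I would construct this by first extending $v$ to the Gauss valuation on $K(x_1,\dots,x_\ell)$ (whose residue field is $k(x_1,\dots,x_\ell)$), then writing $E$ as a finite extension of $k(x_1,\dots,x_\ell)$ generated by a primitive element $\alpha$ with minimal polynomial $\bar g$, lifting $\bar g$ to a monic $g \in K(x_1,\dots,x_\ell)[Y]$, setting $E' = K(x_1,\dots,x_\ell)[Y]/(g)$, and localizing the integral closure at a prime above the Gauss valuation. A degree comparison forces the residue field of $w$ to equal $E$. Applying Theorem \ref{thm:4}(iii) to $(E',w)$ gives $u_{diag}(d,E') \geq d \cdot u_{diag}(d,E) > d^{\ell+1} r$, and since $E'/K$ has transcendence degree $\ell$, the reformulation of $u_{diag,s}$ then implies $u_{diag,s}(d,K) > dr$. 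Letting $r \to u_{diag,s}(d,k)$ completes the proof.

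For part (iii), let $K'/K$ be a finite extension. Completeness of $A$ forces $v$ to extend uniquely to a discrete valuation on $K'$, with complete (hence Henselian) valuation ring $A'$ and residue field $k'$ a finite extension of $k$. Part (ii) applied to $A'$ then yields $u_{diag}(d,K') = d \cdot u_{diag}(d,k')$, and the desired bound follows from the corresponding control of the diagonal $u$-invariant in the residue-field tower $k' \supseteq k$.

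The main obstacle will be the lifting step in part (i): ensuring that the constructed $E'/K$ carries a DVR whose residue field is \emph{exactly} $E$ rather than some proper subextension, and dealing with potential inseparability of $E$ over $k(x_1,\dots,x_\ell)$ in positive characteristic. The hypothesis $\mathrm{char}(k) \nmid d!$ keeps the arithmetic of these lifts clean, and a primitive-element-plus-monic-lift argument works directly in the separable case; in general one may iterate over a chain of generators to reduce to that case.
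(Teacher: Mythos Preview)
Your treatment of (ii), the first inequality in (i), and the lifting argument for $u_{diag,s}$ is exactly the route the paper takes: the paper cites Theorem~\ref{thm:4} directly for the first three claims and says the second claim in (i) is ``analogous to \cite[4.9]{H-H-K}, employing Theorem~\ref{thm:4} instead of Springer's Theorem.'' Your Gauss-valuation-plus-monic-lift construction is precisely that analogue spelled out, so nothing is different there.

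There is, however, a genuine gap in your argument for (iii). After obtaining $u_{diag}(d,K') = d\cdot u_{diag}(d,k')$ you appeal to ``the corresponding control of the diagonal $u$-invariant in the residue-field tower $k' \supseteq k$.'' No such control exists for the ordinary diagonal $u$-invariant: $u_{diag}(d,k')\le u_{diag}(d,k)$ can fail for finite extensions. For instance, with $d=5$ and $k=\mathbb{F}_7$ one has $\gcd(5,6)=1$, so every element of $\mathbb{F}_7$ is a fifth power and $u_{diag}(5,\mathbb{F}_7)=1$; but $|\mathbb{F}_{7^4}^\times|=2400$ is divisible by $5$, so $\mathbb{F}_{7^4}$ has nontrivial fifth-power classes and $u_{diag}(5,\mathbb{F}_{7^4})>1$. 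The only invariant that automatically bounds $u_{diag}(d,k')$ for all finite $k'/k$ is, by its very definition, the \emph{strong} invariant $u_{diag,s}(d,k)$. So either the bound in (iii) should read $d\,u_{diag,s}(d,k)$ (which is exactly what \cite[4.9]{H-H-K} proves for $d=2$), in which case your outline goes through verbatim, or an additional hypothesis on $k$ is being tacitly assumed. Either way, the step you flagged as routine is the one that does not follow from Theorem~\ref{thm:4} alone.
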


The first assertions of (i) as well as (ii) and (iii) follow from Theorem \ref{thm:4}.
The proof of the second claim in (i) is analogous to the one of \cite[4.9]{H-H-K}, employing Theorem
\ref{thm:4} instead of Springer's Theorem.

\subsection{}

A field $K$ is called an \textit{$m$-local field} with
\textit{residue field $k$} if there is a sequence of fields $k_0,\dots,k_m$
with $k_0=k$ and $k_m=K$, and such that $k_i$ is the fraction field of an
excellent Henselian discrete valuation ring with residue field $k_{i-1}$ for
$i=1,\dots,m$.  Recall that a discrete valuation ring $R$ is called \textit{excellent}, if the field extension ${\wh K}/K$
is separable, where ${\wh K}$ denotes the quotient field of $R$
and $K$ is its completion. (This condition is trivially satisfied if $K$ has characteristic 0 or
$R$ is complete.)

Proposition \ref{le:7} implies (compare the next two results with \cite[Corollary 4.13, 4.14]{H-H-K} for quadratic forms):

\begin{corollary} \label{cor:old9}
Suppose that $K$ is an $m$-local field whose residue field $k$ is a $C_r$-field with ${\rm char} (k) \nmid d!$.
Let $F$ be a function field over $K$ in one variable.
\\ (i) $u_{diag}(d,k)=u_{diag,s}(d,k)=d^{r}$ and
$u_{diag}(d,K) =  d^{r+m}$.
\\ Moreover, if some normal $K$-curve with function field $F$
has a $K$-point, then\\
 $u_{diag}(d,F) \ge d^{r+m+1}$.
\\ (ii)
If $u_{diag}(d,k')=d^r$ for every finite extension $k'/k$, then $u_{diag}(d,F) \ge d^{r+m+1}$.
\end{corollary}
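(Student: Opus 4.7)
The plan is to prove (i) by combining the upper bounds coming from $k$ being $C_r$ with the equality-for-Henselian-valuations result in Proposition \ref{le:7}(ii), and to obtain the lower bounds on $u_{diag}(d,F)$ by exploiting a discrete valuation on $F$ whose residue field is either $K$ itself (in the ``moreover'' clause of (i)) or a finite extension of $K$ (in (ii)).

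For the equalities $u_{diag}(d,k) = u_{diag,s}(d,k) = d^r$ in (i), the upper bounds follow immediately from the $C_r$ hypothesis: every homogeneous form over $k$ of degree $d$ in more than $d^r$ variables is isotropic, so in particular $u_{diag}(d,k) \leq d^r$; since every finite extension of a $C_r$-field is again $C_r$ and every one-variable function field over a $C_r$-field is $C_{r+1}$, the equivalent characterization of $u_{diag,s}$ recalled in Section \ref{sec:prel} also yields $u_{diag,s}(d,k) \leq d^r$. Granted the matching lower bound $u_{diag}(d,k) \geq d^r$, the equality $u_{diag}(d,K) = d^{m+r}$ follows by climbing the $m$-local tower $k = k_0, k_1, \ldots, k_m = K$: at each level Proposition \ref{le:7}(ii) gives $u_{diag}(d,k_i) = d\,u_{diag}(d,k_{i-1})$, so inductively $u_{diag}(d,K) = d^m u_{diag}(d,k) = d^{m+r}$.

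For the moreover clause of (i), a $K$-rational point $P$ on a normal $K$-curve with function field $F$ makes the local ring at $P$ a discrete valuation ring of $F$ with residue field $K$; Proposition \ref{le:7}(i) then yields $u_{diag}(d,F) \geq d\,u_{diag}(d,K) = d^{m+r+1}$. For part (ii), where no $K$-point is assumed, I instead apply Theorem \ref{thm:4}(iv) to $F$ as a field of finite type and transcendence degree $1$ over $K$: this produces a finite extension $K'/K$ with $u_{diag}(d,F) \geq d\,u_{diag}(d,K')$. Because each valuation ring in the tower defining the $m$-local field $K$ is Henselian, the valuations extend uniquely up a finite extension, so $K'$ inherits an $m$-local structure whose bottom residue field is some finite extension $k'/k$. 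The hypothesis of (ii) supplies $u_{diag}(d,k') = d^r$, and iterating Proposition \ref{le:7}(ii) along the tower of $K'$ gives $u_{diag}(d,K') = d^m \cdot d^r = d^{m+r}$, hence $u_{diag}(d,F) \geq d^{m+r+1}$.

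The main obstacle is the sharp lower bound $u_{diag}(d,k) \geq d^r$ needed to turn the inequality in (i) into an equality: the $C_r$-property gives only the upper bound, and one has to exhibit an anisotropic diagonal form over $k$ of dimension exactly $d^r$, which is not automatic from $C_r$-ness alone and typically has to be fed in as an additional hypothesis on $k$. A secondary technical point is verifying carefully that an arbitrary finite extension of an $m$-local field is again $m$-local with finite residue field extensions at every level; this follows from the uniqueness of extensions of Henselian valuations but needs explicit bookkeeping along the tower.
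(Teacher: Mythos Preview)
Your approach matches the paper's essentially line for line: the paper also derives the upper bounds from the $C_r$ property, climbs the Henselian tower via Proposition~\ref{le:7}(ii) to get $u_{diag}(d,K)=d^m\,u_{diag}(d,k)$, and for the ``moreover'' clause uses the discrete valuation at a $K$-rational point together with Proposition~\ref{le:7}(i). For part (ii) the only cosmetic difference is that the paper picks a closed point $\xi$ on a regular $K$-curve with function field $F$ directly, rather than invoking Theorem~\ref{thm:4}(iv); the local ring at $\xi$ is then a discrete valuation ring with residue field a finite extension $\kappa(\xi)$ of $K$, and the paper---exactly as you do---asserts that $\kappa(\xi)$ is again an $m$-local field with bottom residue field a finite extension $k'$ of $k$, so that part (i) applied to $k'$ and $\kappa(\xi)$ gives $u_{diag}(d,\kappa(\xi))\ge d^{r+m}$.

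Your concern about the lower bound $u_{diag}(d,k)\ge d^r$ is well placed: the paper simply writes ``Since $k$ is a $C_r$-field, $u_{diag}(d,k)=d^r$'' and proceeds, so it treats this equality as given rather than deriving it from the $C_r$ hypothesis. As you note (and as the paper itself illustrates later for finite fields, where $u_{diag}(d,\mathbb{F}_q)$ can be strictly smaller than $d$), the $C_r$ property alone yields only the upper bound; the sharp equality in (i) therefore implicitly presupposes the existence of an anisotropic diagonal degree-$d$ form of dimension $d^r$ over $k$. Your identification of this as an additional input, rather than a gap in your own argument, is accurate.
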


\begin{proof}
(i) Since $k$ is a $C_r$-field, $u_{diag}(d,k)=d^r$, thus
$u_{diag}(d,k) \le u_{diag, s}(d,k) \le d^r$ and the first two equations follow.
Applying Proposition \ref{le:7}
and induction yields that $u_{diag}(d,K)
\ge d^m u_{diag}(d,k) = d^{r+m}$.
Let $X$ be a normal $K$-curve with function field $F$ and let $\xi$ be a $K$-point on $X$.
The local ring at $\xi$ has fraction field $F$ and residue field $K$.  So Proposition \ref{le:7}
implies that $u_{diag}(d,K) =  d^{m}u_{diag}(d,k) $ and
$u_{diag}(d,F) \ge d u_{diag}(d,K) =d^{r+m+1}$.
\\ (ii) Choose a normal or equivalently a regular $K$-curve $X$ with function field $F$, and a closed point
 $\xi$ on $X$.  Let $R$ be the local ring of $X$ at $\xi$ with residue field $\kappa(\xi)$.
 Then the fraction field of $R$ is $F$, and $\kappa(\xi)$ is a finite extension of $K$. Hence $\kappa(\xi)$
 is an $m$-local field whose residue field $k'$ is a finite extension of $k$. By assumption, $u_{diag}(d,k')=d^r$
and $k'$ is a $C_r$-field since $k$ is. So applying part (ii)
to $k'$ and $\kappa(\xi)$, it follows that $u_{diag}(d,\kappa(\xi)) \ge  d^{r+m}$. Proposition \ref{le:7}
 yields $u_{diag}(d,F) \ge d^{r+m+1}$.
\end{proof}

\begin{corollary} \label{cor:old10}
(i)
Let $F$ be a one-variable function field over an $m$-local field $K$ with residue field $k$ such that
${\rm char} (k) \nmid d!$ and
$k$ is algebraically closed. Then $u_{diag}(d,F) \ge d^{m+1}$.
\\ (ii) If $k$ is a finite field and $u_{diag}(d,k)=d^r$ with $r\in\{0,1\}$, then
$u_{diag}(d,k)=u_{diag,s}(d,k)=d^{r}$ and $u_{diag}(d,K) \ge  d^{r+m}$.
 Moreover, if some normal $K$-curve with function field $F$
has a $K$-point, then $u_{diag}(d,F) \ge d^{r+m+1}$.
\end{corollary}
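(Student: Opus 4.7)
The plan is to deduce both assertions from Corollary~\ref{cor:old9}, supplemented where convenient by a direct iteration of Proposition~\ref{le:7}(i) along the $m$-local tower. For part (i), since $k$ is algebraically closed its only finite extension is itself and $u_{diag}(d,k)=1=d^0$; the field $k$ is then trivially a $C_0$-field, and the hypothesis ``$u_{diag}(d,k')=d^r$ for every finite extension $k'/k$'' of Corollary~\ref{cor:old9}(ii) holds vacuously with $r=0$. Applying that corollary directly yields $u_{diag}(d,F)\geq d^{0+m+1}=d^{m+1}$.

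For part (ii), recall that every finite field is a $C_1$-field by Chevalley--Warning. When $r=1$, Corollary~\ref{cor:old9}(i) applied with $r=1$ simultaneously gives the equalities $u_{diag}(d,k)=u_{diag,s}(d,k)=d$ and the bound $u_{diag}(d,K)\geq d^{m+1}=d^{r+m}$. For both values $r\in\{0,1\}$ the lower bound $u_{diag}(d,K)\geq d^{r+m}$ can also be read off without invoking $C_r$-theory: starting from $u_{diag}(d,k)=d^r$ and iterating Proposition~\ref{le:7}(i) $m$ times up the defining tower of Henselian discretely valued fields of $K$ multiplies the estimate by a factor of $d$ at each step, producing $u_{diag}(d,K)\geq d^{m}\,u_{diag}(d,k)=d^{r+m}$.

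The final ``moreover'' uses a $K$-point $\xi$ of a normal $K$-curve $X$ with function field $F$: the local ring $\mathcal{O}_{X,\xi}$ is a discrete valuation ring with fraction field $F$ and residue field $K$, so one last application of Proposition~\ref{le:7}(i) gives $u_{diag}(d,F)\geq d\,u_{diag}(d,K)\geq d^{r+m+1}$. The delicate step is the equality $u_{diag,s}(d,k)=d^r$ in the $r=0$ branch of (ii); since a finite field is only $C_1$, this part does not follow directly from the $C_r$-machinery and needs the hypothesis $u_{diag}(d,k)=1$ to propagate through finite extensions of $k$, exactly in the style of the upper-bound half of the proof of Corollary~\ref{cor:old9}(i). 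I expect this propagation to be the main, and essentially only, obstacle to a fully routine deduction.
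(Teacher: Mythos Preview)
Your approach matches the paper's: both parts are deduced from Corollary~\ref{cor:old9}, and your supplementary remark that the lower bounds $u_{diag}(d,K)\ge d^{r+m}$ and $u_{diag}(d,F)\ge d^{r+m+1}$ follow by iterating Proposition~\ref{le:7}(i) along the $m$-local tower (and once more at the local ring of a $K$-point) is exactly how the paper argues inside the proof of Corollary~\ref{cor:old9}. Part~(i) and the case $r=1$ of part~(ii) are thus handled completely and in the same way as in the paper.

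You are right to isolate the equality $u_{diag,s}(d,k)=1$ in the $r=0$ branch of~(ii) as the one non-routine step, but the obstacle you flag is in fact not removable under the stated hypotheses and cannot be filled ``in the style of'' the $C_r$ argument. The upper bound in Corollary~\ref{cor:old9}(i) rests on the fact that the class of $C_r$-fields is closed under finite extensions and that one-variable function fields over a $C_r$-field are $C_{r+1}$; the bare hypothesis $u_{diag}(d,k)=1$ has no such closure property. Concretely, $u_{diag}(d,\mathbb{F}_q)=1$ is equivalent to $\gcd(d,q-1)=1$, and this need not persist in extensions: for $d=5$ and $q=7$ one has $\gcd(5,6)=1$ but $\gcd(5,7^4-1)=5$, so $u_{diag}(5,\mathbb{F}_{7^4})\ge 2$ while $7\nmid 5!$. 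Hence $u_{diag,s}(d,k)=1$ does not follow from $u_{diag}(d,k)=1$ for finite $k$. The paper's own proof simply asserts $u_{diag}(d,k)=u_{diag,s}(d,k)=d^r$ and invokes Corollary~\ref{cor:old9}, so the gap you spotted is present there as well; the remaining assertions of~(ii) for $r=0$ are unaffected, since they only need $u_{diag}(d,k)=d^r$ together with Proposition~\ref{le:7}(i), exactly as you wrote.
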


\begin{proof}
(i) This is a special case of Corollary \ref{cor:old9}
 using that an algebraically closed field $k$ is $C_0$, satisfies $u_{diag}(d,k)=1$,
 and has no non-trivial finite extensions.\\
 (ii) A finite field is $C_1$  \cite[II.3.3(a)]{Ser}, hence $u_{diag}(d,k)\leq d$.
 Here $u_{diag}(d,k)=u_{diag,\,s}(d,k)=d^r$ and Corollary \ref{cor:old9}
 yields the assumption.
\end{proof}

In general,  for any finite field $k={\mathbb F}_q$ we obviously do not have $u_{diag}(d,k)=d^r$, $r\in\{0,1\}$: for instance, if
$-1\in {\mathbb F}_q^{\times d}$ and $d \geq 4$ then
$u_{diag}(d,{\mathbb F}_q)\leq d-1$
by \cite{O}. Or, if $d^*={\rm gcd}(d,q-1)$, then
$u_{diag}(d,\mathbb{F}_q) \leq  d^*.$
 This implies that $u_{diag}(d,\mathbb{F}_q)=1$, if $d$ is relatively prime to $q-1$ and that
for  $q > (d^*-1)^4$, $u_{diag}(d, \mathbb{F}_q)=2$ \cite[5.1]{Pu}.

%%%%%%%%%%%%%%%%%%%%%%%%%%%%%%%%%%%%%%%%%%%%%%%%%%%%%%%%%%%%%%%%%%%%%%%%%%%%%%%%%%%%%%%%%
%
%The local-global behaviour of diagonal forms of higher degree over function fields of $p$-adic curves
%
%%%%%%%%%%%%%%%%%%%%%%%%%%%%%%%%%%%%%%%%%%%%%%%%%%%%%%%%%%%%%%%%%%%%%%%%%%%%%%%%%%%%%%%%%
\section{The behaviour of diagonal forms of higher degree over function fields of $p$-adic curves} \label{sec:Parimala}

Whenever we write `discrete valuation ring' and `discrete valuation' we mean a discrete valuation
ring of rank one and a valuation with value group $\mathbb{Z}$.

\subsection{}
Let $A$ be a complete discrete valuation ring with fraction field $K$ and residue field
$k$ with ${\rm char} (k) \nmid d!$. Let $X$ be a smooth, projective, geometrically integral curve
over $K$ and $F = K(X)$ be the function field of $X$.
Let $t$ denote a uniformizing parameter for $A$.
 For each (rank one) discrete valuation $v$ of $F$, let $F_v$ denote the completion of $F$ with respect to $v$.

 We will adapt some ideas from \cite{CT-P-S} to diagonal forms of higher degree:
take a nondegenerate form $\varphi=\langle a_1,\dots,a_n\rangle$  of degree $d$ over $F$.
Then choose a regular proper model ${\mathcal X}/A$ of $X/K$, such that there exists a reduced divisor
$D$ with strict normal crossings which contains both the support of the divisor of all the entries $a_i$,
$1\leq i\leq n$,
and the components of the special fibre of $X/A$.
(Note that this implies that the regular proper model ${\mathcal X}/A$ depends on the form
$\varphi$, and thus so do  $Y,Y_i$, $S_0$, $S$, $F_P,F_U,\dots$ as defined in the following.)

 Let $Y = {\mathcal X}\times_Ak$ be the special fibre of $X/A$.
Let $x_i$ be the generic point of an irreducible component $Y_i$ of $Y$. Then there is an affine Zariski neighbourhood
$W_i\subset {\mathcal X}$ of $x_i$, such that the restriction of $Y_i$ to $W_i$ is a principal divisor.
Let $S_0$ be a finite set of closed points of $Y$ containing all singular points of
 $D$, and all the points that lie on some $Y_i$, but not in the corresponding $W_i$.

Choose a finite $A$-morphism $f : {\mathcal X}\to\mathbb{P}_A^1$
 as in \cite[Proposition 6.6]{H-H}. Let $S$ be the inverse image under $f$ of the point
at infinity of the special fibre $\mathbb{P}_k^1$. Then the set $S_0$ is contained in $S$. All the intersection points of two components
$Y_i$ are in $S$. Each component $Y_i$ contains at least one point of $S$.
Let $U \subset Y$ run through the reduced irreducible components of the complement of $S$ in $Y$.
Then each $U$ is a regular affine irreducible curve over $k$ and we define $k[U]$ to be its ring of regular functions
and $k(U)$ to be its fraction field. $k[U]$ is a Dedekind domain and $U ={\rm Spec}\, k[U]$.
Each $U$ is contained in an open affine subscheme ${\rm Spec}\,R^U$ of ${\mathcal X}$ and is a
principal effective divisor in ${\rm Spec}\,R^U$.
Moreover, $R_U$ is the ring of elements in $F$ which are regular on $U$ and also a regular ring,
since it is the direct limit of regular rings. The ring $R_U$ is a localisation of $R^U$ and so $U$ is a principal
effective divisor on ${\rm Spec}\,R_U$ given by the vanishing of an element $s \in R_U$.
The $t$-adic completion ${\wh R}_U$ of $R_U$ is a domain and  coincides with the $s$-adic
completion of $R_U$, since $t = us^r$
 for some integer $r \geq 1$ and a unit $u \in R_U^\times$.
 By definition, $F_U$ is the field of fractions of ${\wh R}_U$. We have
$k[U] = R_U/s ={\wh R}_U /s$.
For $P \in S$, the completion ${\wh R}_P$ of the local ring $R_P$ of $\mathcal{X}$ at $P$
is a domain and $F_P$ is the field of fractions of ${\wh R}_P$.
Let $p = (U, P)$ be a pair with $P \in S$ in the closure of an irreducible component $U$ of the
complement of $S$ in $Y$. Then let $R_p$ be the complete discrete valuation ring which is the completion
of the localisation of ${\wh R}_P$ at the height one prime ideal corresponding to $U$. Then
 $F_p$ is the field of fractions of ${\wh R}_p$ and $F$ is the inverse limit of the finite inverse system of fields
$\{F_U , F_P , F_p\}$ by \cite[Proposition 6.3]{H-H}.

The following can be seen as a weak generalization of \cite[Theorem 3.1]{CT-P-S} to diagonal forms of higher degree. Here we are not able to conclude that under the given assumptions, $\varphi$ is isotropic over $F$, only over
the $F_U$'s and $F_P$'s:

\begin{theorem} \label{thm:21}
Let $\varphi$ be a nondegenerate diagonal form of degree $d$ over
$F$. If $\varphi $ is isotropic over the
completion $F_v$  of $F$ with respect to each discrete valuation $v$ of $F$ with residue field either a function field in
one variable over $k$ or a finite extension of $K$,
then:
\\ (i) $\varphi$ is isotropic over $F_U$ for each reduced irreducible component $U\subset Y$ of the
complement of $S$ in $Y$,
\\ (ii) $\varphi$ is isotropic over  $F_P$ for each $P\in S$.
\end{theorem}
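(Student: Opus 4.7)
I would prove both parts by a double application of the Springer-type result for diagonal forms over Henselian valued fields (Proposition~3.1 of \cite{Mo}, stated just before Theorem~\ref{thm:4}).  The hypothesis supplies, for each divisorial valuation $v$ on $F$ of the allowed type, the isotropy of $\varphi$ over the Henselian completion $F_v$; from this Mori's result extracts a residue form that is isotropic over the residue field.  This residue isotropy is then transported to the relevant patching field ($F_U$ for part (i), $F_P$ for part (ii)), where a second application of Mori's result gives the isotropy of $\varphi$ on that field.

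For part (i), fix a reduced irreducible component $U \subset Y \setminus S$, lying inside the component $Y_i$ of $Y$, and let $v$ be the discrete valuation on $F$ given by the generic point of $Y_i$.  Its residue field $k(Y_i) = k(U)$ is a function field in one variable over $k$, so by hypothesis $\varphi$ is isotropic over $F_v$.  Let $s \in R_U$ be a uniformizer for $v$ that also cuts out $U$ as a divisor on $\Spec R_U$, and write $\varphi \cong s^0\varphi_0 \perp s\varphi_1 \perp \dots \perp s^{d-1}\varphi_{d-1}$ with each $\varphi_r$ a diagonal unit form as in Section~\ref{sec:valued}.  The entries of $\varphi$ lie in $F$, so the same decomposition is valid over both $F_v$ and $F_U$, and produces the same residue forms $\ov\varphi_r$ over $k(U)$.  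Proposition~3.1 of \cite{Mo}, applied to $F_v$, delivers an index $r$ for which $\ov\varphi_r$ is isotropic over $k(U)$.  Since $\wh R_U$ is $s$-adically complete with $\wh R_U/s = k[U]$, the field $F_U$ is Henselian for the $s$-adic valuation and has residue field $k(U)$; applying Proposition~3.1 of \cite{Mo} once more, this time to $F_U$, lifts the residue isotropy to isotropy of $\varphi$ over $F_U$.

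For part (ii), fix $P \in S$.  Every component of $Y$ meets $S$, so $P$ lies on some component $Y_i$ of $Y$; let $U$ be the component of $Y \setminus S$ inside $Y_i$ with $P \in \ov{U}$.  Let $\mathfrak{p}$ be the height-one prime of $\wh R_P$ corresponding to $Y_i$, and $w$ the associated discrete valuation on $F_P$.  Its restriction to $F$ is $v_{Y_i}$, whose residue field is $k(U)$; by hypothesis $\varphi$ is isotropic over $F_{v_{Y_i}}$, and Proposition~3.1 of \cite{Mo} again furnishes some $r$ with $\ov\varphi_r$ isotropic over $k(U)$.  The residue field of $w$ on $F_P$ is $\kappa(\mathfrak{p}) = \oper{Frac}(\wh R_P/\mathfrak{p})$, which contains $k(Y_i) = k(U)$ via the natural map $\mc{O}_{Y_i,P} \hookrightarrow \wh R_P/\mathfrak{p}$; hence $\ov\varphi_r$ is also isotropic over $\kappa(\mathfrak{p})$.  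The completeness of $\wh R_P$ makes $F_P$ Henselian with respect to $w$, so a final application of Proposition~3.1 of \cite{Mo} yields $\varphi$ isotropic over $F_P$.

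The main obstacle I expect is the formal assertion that $F_U$ and $F_P$ are genuinely Henselian with respect to the chosen discrete valuations.  Although $\wh R_U$ is Henselian as a pair along $(s)$ and $\wh R_P$ is a complete local ring, the localized valuation rings $(\wh R_U)_{(s)}$ and $(\wh R_P)_{\mathfrak{p}}$ are not automatically Henselian in the strict DVR sense; this is the technical point that has to be supplied by invoking properties of the patching fields from the work of Harbater and Hartmann \cite{H-H}.  Granted that step, the remainder of the argument is a routine double application of the Springer-type theorem for diagonal forms over Henselian valued fields.
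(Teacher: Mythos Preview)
Your reduction step—using the hypothesis at a divisorial valuation and applying Morandi's Springer theorem to produce an isotropic residue form—is exactly what the paper does.  The gap is in the lifting step, and it is a real gap, not a formality that \cite{H-H} will supply.

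The valuation rings $(\wh R_U)_{(s)}$ and $(\wh R_P)_{\mathfrak p}$ are \emph{not} Henselian.  For a concrete model take $\wh R_P=k[[x,y]]$ with $\mathfrak p=(y)$: the polynomial $T^2-(x^2+y)$ has the simple roots $\pm x$ over the residue field $k((x))$, but $x^2+y$ is irreducible in the UFD $k[[x,y]]$, so $\sqrt{x^2+y}\notin F_P$.  The same example with $k[x][[s]]$ in place of $k[[x,y]]$ shows that $(\wh R_U)_{(s)}$ fails Hensel's lemma as well.  So Proposition~3.1 of \cite{Mo} cannot be applied to $F_U$ or $F_P$ directly.

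What \emph{is} available is that $(\wh R_U,s\wh R_U)$ is a Henselian pair and $\wh R_P$ is a complete local ring; both facts let you lift points of a smooth scheme, but only from $k[U]=\wh R_U/s$ (resp.\ from the closed residue field $\kappa(P)$), not from the fraction field $k(U)$ (resp.\ from $\kappa(\mathfrak p)$).  The paper supplies the missing step in each case.  For (i), the isotropic residue form $\ov\rho_j$ is viewed as a smooth projective hypersurface over the Dedekind ring $k[U]$; a $k(U)$-point of a smooth projective $k[U]$-scheme spreads out to a $k[U]$-point, which then lifts to $\wh R_U$ by completeness.  For (ii), a single height-one reduction is not enough: the paper uses \emph{both} regular parameters $x,y$ at $P$ to split $\varphi$ into $d^2$ unit forms $\varphi_1,\dots,\varphi_{d^2}$ over $R_P$, applies Springer first for the $y$-adic valuation and then for the $\bar x$-adic valuation on $E=\mathrm{Frac}(R_P/(y))$, and thereby lands an isotropic $\ov\varphi_j$ over the finite residue field $\kappa(P)$.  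From there Hensel's lemma in the complete local ring $\wh R_P$ lifts the zero to $F_P$.  Your single-valuation approach to (ii) only reaches $\kappa(\mathfrak p)\supset k(U)$, which is still too large for the Hensel lift you need.
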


\begin{proof}
 Suppose  $\varphi=\left< a_1,\dots, a_n \right>$.
\\ (i) Each  entry $a_i$ of  $\varphi$ is supported only along $U$ in ${\rm Spec}\,R_U$,
thus has the form
$u s^j$ where $u\in R_U^\times$. We sort the entries $a_i=u_i s^j$ by the power $j$ of $s$ and use them to define  new
diagonal forms $\rho_j$ which  have all the $u_i$'s belonging to those $a_i$ where $s$ occurred
in the $j$th power as their diagonal entries. Hence  $\varphi$ is isomorphic to the diagonal form
$$\rho_0\perp s\rho_1\perp  \dots\perp s^{d-1}\rho_{d-1}$$
over $F$, where the $\rho_i$ are nondegenerate diagonal forms of degree $d$ over $R_U$.
Note that if for some $j\in\{ 0,1,\dots,d-1\}$
there is no $a_i$ with $a_i=u_i s^j$, then there is no corresponding form $\rho_j$ and a  $\rho_j$
 does not appear as a component in the sum.

By hypothesis, $\varphi$ is isotropic over the field of fractions of the completed local ring of
$\mathcal{X}$ at the generic point of $U$. By Theorem  \ref{thm:4}, this implies
that the image
of at least one of the forms $\rho_0$, $\rho_1$ or $\rho_{d-1}$ under the composite
homomorphism $R_U \to k[U] \to k(U)$ is isotropic over $k(U)$.
Since the residue characteristic $p$ does not divide $d!$,  the forms $\rho_0, \rho_1,\dots,\rho_{d-1}$
 define a smooth projective variety
over $R_U$. In particular, all of them define a smooth variety over $k[U]$. Since
$k[U]$ is a Dedekind domain, if such a projective variety has a point over $k(U)$, it has a point over $k[U]$. Since the
variety is smooth over $R_U$, a $k[U]$-point lifts to an ${\wh R}_U$-point (cf. the
 discussion after \cite[Lemma 4.5]{H-H-K}). Thus $\varphi$ has a nontrivial zero over $F_U$.
\\ (ii) Let $P \in S$. The local ring $R_P$ of $\mathcal{X}$ at $P$ is
regular. Its maximal ideal is generated by two elements $(x, y)$ with the property that any $a_i$ is
the product of a unit, a power of $x$ and a power of $y$. Thus over $F$, the fraction field of $R_P$,
  $\varphi$ is isomorphic to
$$\varphi_1\perp x\varphi_2\perp y\varphi_3\perp xy\varphi_4\perp x^2\varphi_5\perp y^2\varphi_6\perp x^2y^2\varphi_7
\perp x^2y\varphi_8\perp x y^2\varphi_9
\perp \dots \perp x^{d-1}y^{d-1}\varphi_{d^2},$$
where each $\varphi_i$ is a nondegenerate diagonal form over $R_P$. Let $R_y$ be the localization
of $R_P$ at the prime ideal $(y)$. $R_y$ is a discrete valuation ring with fraction field $F$. The
residue field $E$ of $R_y$ is the field of fractions of the discrete valuation ring $R_P /(y)$. By hypothesis, the
form
$$(\varphi_1\perp x\varphi_2\perp  x^2\varphi_5\perp\dots)\perp y(\varphi_3\perp x\varphi_4\perp\dots)
\perp y^2(\varphi_6\perp\dots )\perp\dots\perp  y^{d-1}(\dots\perp x^{d-1}\varphi_{d^2})$$
is isotropic over the field of fractions of the completion of
$R_y$. By Theorem  \ref{thm:4},  the reduction of one of the forms
$$(\varphi_1\perp x\varphi_2\perp  x^2\varphi_5\perp\dots),(\varphi_3\perp x\varphi_4\perp\dots ),\cdots,$$
 is isotropic  over $E$. Since $x$ is a uniformizing parameter for $R_P /(y)$, by
Theorem \ref{thm:4} this  implies that over the residue field
of $R_P /(y)$, the reduction of one of the forms $\varphi_1,\varphi_2,\varphi_3\dots ,\varphi_{{d}^2}$
 is isotropic. But then one of these forms
is isotropic over ${\wh R}_P$, hence over the field $F_P$ which is the fraction field of ${\wh R}_P$.
\end{proof}

\begin{remark}
(i) In the proof of Theorem \ref{thm:21},  one of the forms
is isotropic over ${\wh R}_P$, and since $R_p$ is the complete discrete valuation ring which is the completion
of the localisation of ${\wh R}_P$ at the height one prime ideal corresponding to $U$ when $p = (U, P)$,
this form is also  isotropic over $R_p$ and therefore over the field of fractions $F_p$ of ${\wh R}_p$.
This implies that if $\varphi $ is isotropic over the
completion of $F$ with respect to each discrete valuation of $F$,
then $\varphi$ is isotropic over $F_U$ for each reduced irreducible component $U\subset Y$ of the
complement of $S$ in $Y$, over  $F_P$ for each $P\in S$ and over $R_p$ for each $p = (U, P)$.
Since $F$ is the inverse limit of the finite inverse system of fields
$\{F_U , F_P , F_p\}$, $\varphi $ is isotropic over all overfields used in the inverse limit.
\\ (ii)
The  discrete valuation rings used  in the above proof are the
local rings at a point of codimension 1 on a suitable regular proper model $\mathcal{X}$ of $X$
determined by the choice of $\varphi$
(analogously as noted in \cite[Remark 3.2]{CT-P-S}).
\end{remark}

 Given a nondegenerate diagonal form $\varphi$ of degree $d$ and dimension greater than two over $F$,
 it is not clear whether the isotropy of  $\varphi$ over $F_v$
for each $v$ (respectively, of  $\varphi$ over all $F_U$, $F_P$ and $F_p$) implies that $\varphi$ is isotropic (the
 fact that ${\rm dim}\,\varphi>2$ is necessary: it is easy to adjust the example in
 \cite[Appendix]{CT-P-S} to two-dimensional diagonal forms of even degree).

\begin{corollary}\label{cor:4}
Let $r\geq 1$ be an integer and $d\geq 3$. Assume that any diagonal
form in strictly more than $dr$ variables over any function field in one variable over $k$ is
isotropic. Then:
\\ (i) Any diagonal form of degree $d$ and dimension $>d^2r$  over the function field
$F = K(X)$ of a curve $X/K$ is isotropic over $F_v$, for every discrete valuation $v$
with residue field either a function field in one variable over $k$ or a finite extension of $K$.
\\ (ii) Any diagonal form of degree $d$ and dimension $>d^2r$  over the function field
$F = K(X)$ of a curve $X/K$
 is isotropic over $F_U$ for each reduced, irreducible component $U\subset Y$ of the
complement of $S$ in $Y$ and is isotropic over  $F_P$ for each $P\in S$.
\end{corollary}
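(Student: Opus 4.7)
The plan is to establish part (i) by a direct uniform bound on $u_{diag}(d,F_v)$ that covers both possibilities for the residue field, and then deduce part (ii) as an immediate consequence of Theorem \ref{thm:21}.

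For part (i), fix a discrete valuation $v$ of $F$ of the allowed type, and write $\kappa(v)$ for its residue field. Note that $F_v$ is a complete, hence Henselian, discretely valued field with value group $\mathbb Z$, so Theorem \ref{thm:4}(i) gives the identity $u_{diag}(d,F_v)=d\cdot u_{diag}(d,\kappa(v))$. I would split into the two cases in the hypothesis. In the first case, $\kappa(v)$ is a function field in one variable over $k$, and then the standing assumption of the corollary yields directly $u_{diag}(d,\kappa(v))\leq dr$, so $u_{diag}(d,F_v)\leq d^2r$. In the second case, $\kappa(v)$ is a finite extension $L$ of $K$; since $A$ is complete, $L$ is again the fraction field of a complete discrete valuation ring with residue field $k'$, a finite extension of $k$. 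Applying Theorem \ref{thm:4}(i) once more gives $u_{diag}(d,L)=d\cdot u_{diag}(d,k')$.

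The step that requires a little care is bounding $u_{diag}(d,k')$ from the hypothesis, which is formulated only over function fields in one variable over $k$ (not over finite extensions of $k$). The trick is to interpose the rational function field $k'(t)$: since $k'/k$ is finite, $k'(t)$ is finitely generated of transcendence degree one over $k$, hence is itself a function field in one variable over $k$, so the hypothesis gives $u_{diag}(d,k'(t))\leq dr$. On the other hand, the localisation $k'[t]_{(t)}$ is a discrete valuation ring with fraction field $k'(t)$ and residue field $k'$, so Theorem \ref{thm:4}(iii) (equivalently Proposition \ref{le:7}(i)) yields
\[
d\cdot u_{diag}(d,k')\leq u_{diag}(d,k'(t))\leq dr,
\]
and hence $u_{diag}(d,k')\leq r$. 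Combining with the two applications of Theorem \ref{thm:4}(i) above gives $u_{diag}(d,F_v)=d^2\cdot u_{diag}(d,k')\leq d^2r$ in this second case as well. Thus in both cases any diagonal form of degree $d$ and dimension $>d^2r$ over $F_v$ is isotropic, proving (i).

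For part (ii), I simply invoke Theorem \ref{thm:21}: the form $\varphi$ satisfies its hypothesis by (i), and the conclusion of Theorem \ref{thm:21} is exactly the isotropy of $\varphi$ over every $F_U$ and every $F_P$. The main obstacle in the whole argument is the residue-field step above, i.e.\ transferring the assumed bound from function fields in one variable over $k$ to finite extensions $k'/k$; everything else is a straightforward invocation of the Springer-type result Theorem \ref{thm:4} over Henselian discretely valued fields.
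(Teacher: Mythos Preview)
Your argument is correct and follows essentially the same route as the paper: in both cases you bound $u_{diag}(d,F_v)$ via Theorem~\ref{thm:4}, and for the key step of bounding $u_{diag}(d,k')$ from the hypothesis on function fields you interpose $k'(t)$, just as the paper does (the paper phrases this as applying Theorem~\ref{thm:4} to $\ell((t))$ rather than invoking part~(iii) for the discrete valuation ring $k'[t]_{(t)}$, but this is the same idea). Part~(ii) via Theorem~\ref{thm:21} is identical to the paper's.
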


Note that $Y$ and $S$  depend on $\varphi$.

\begin{proof} (i)
 Let $L$ be a finite field extension of $K$. This is a complete discretely valued field with residue
field a finite extension $\ell$ of $k$. The assumption made on diagonal forms of degree $d$ over functions fields in
one variable over $k$, in particular diagonal forms of degree $d$ over the field $\ell(t)$, and
Theorem \ref{thm:4} applied to  $\ell((t))$ show that any diagonal form of dimension
$>r$  over $\ell$ has a zero. A second application of Theorem \ref{thm:4} yields that any
diagonal form of degree $d$ of dimension $>dr$ over $L$ is isotropic.
Let $\varphi$ be a diagonal form of dimension $n$ over $F$ with $n > d^2r$.
 By the assumption and Theorem \ref{thm:4}, $\varphi$ is isotropic
over $F_v$ for every discrete valuation $v$
with residue field either a function field in one variable over $k$ or a finite extension of $K$.
\\ (ii) follows from Theorem \ref{thm:21}.
\end{proof}

This shows that trying to extend \cite[Corollary 3.4]{CT-P-S} from quadratic  to diagonal forms of higher degree results in
 a much weaker version.

 \subsection{}
 Let $K$ be a $p$-adic field with residue field $k$ such that ${\rm char} (k) \nmid d!$.

\begin{corollary}\label{cor:5}
Any diagonal form of degree $d$ and dimension $>d^3+1$  over
a function field in one variable $F=K(t)$ is
\\ (i) isotropic over $F_v$, for every discrete valuation $v$
with residue field either a function field in one variable over $k$ or a finite extension of $K$;
\\ (ii) isotropic over $F_U$ for each reduced, irreducible component $U\subset Y$ of the
complement of $S$ in $Y$ and  isotropic over  $F_P$ for each $P\in S$.
\end{corollary}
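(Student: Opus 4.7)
The plan is to obtain Corollary~\ref{cor:5} directly from Corollary~\ref{cor:4} by specializing the auxiliary parameter $r$ to $r=d$, exploiting the fact that the residue field of a $p$-adic field is finite.

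First I would verify the standing hypothesis of Corollary~\ref{cor:4} for $r=d$: that every diagonal form of degree $d$ in strictly more than $d^2$ variables over any function field in one variable over $k$ is isotropic. Since $K$ is $p$-adic, its residue field $k$ is finite, hence $C_1$ by \cite[II.3.3(a)]{Ser}. By \cite[II.4.5]{Ser} a one-variable function field over a $C_1$-field is $C_2$, so every homogeneous (and in particular every diagonal) form of degree $d$ in more than $d^2$ variables over such a field is isotropic. This is precisely the hypothesis of Corollary~\ref{cor:4} with $dr=d^2$.

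Applying Corollary~\ref{cor:4}(i) with $r=d$ then shows that every diagonal form of degree $d$ over $F=K(t)$ of dimension $>d^2r=d^3$ is isotropic over each admissible $F_v$; since every form of dimension $>d^3+1$ automatically has dimension $>d^3$, claim~(i) follows. Claim~(ii) is then exactly Corollary~\ref{cor:4}(ii), which is simply Theorem~\ref{thm:21} fed by~(i). The whole argument is a one-line specialization of Corollary~\ref{cor:4} once the $C_r$-tower ``$k$ finite $\Rightarrow$ $k$ is $C_1$ $\Rightarrow$ $k(C)$ is $C_2$'' has been recorded, so there is no real obstacle; the only cosmetic point is the harmless slack between the proved bound $d^3$ and the looser bound $d^3+1$ asserted in the statement, and the verification that $r=d$ meets the conditions $r\ge 1$ and $d\ge 3$ under which Corollary~\ref{cor:4} is formulated.
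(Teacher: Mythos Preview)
Your proposal is correct and follows essentially the same route as the paper: record that the finite residue field $k$ is $C_1$, hence one-variable function fields over $k$ are $C_2$, so the hypothesis of Corollary~\ref{cor:4} holds with $r=d$, and then read off both parts. The paper phrases (i) as a direct consequence of Theorem~\ref{thm:4} rather than of Corollary~\ref{cor:4}(i), but since the latter is proved via the former this is the same argument; your observation about the slack between $d^3$ and $d^3+1$ is also accurate.
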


\begin{proof}
Every finite field $k$ is $C_1$ and so every  diagonal
form of degree $d$  and dimension  $>d^2$ over any function field in one variable over $k$ (which is $C_2$) is
isotropic. Assertion (i) is a direct consequence of Theorem \ref{thm:4} and (ii)
 follows from  Corollary \ref{cor:4} (ii).
\end{proof}

So if $\varphi$ is a diagonal form of degree $d$ in at least $d^3+1$ variables over
 $\mathbb{Q}(t)$ then $\varphi$  is isotropic over $(\mathbb{Q}_p(t))_U$ for any $p \nmid d!$,
  for each reduced, irreducible component $U\subset Y$ of the
complement of $S$ in $Y$, and isotropic over  $(\mathbb{Q}_p(t))_P$ for each $P\in S$.

\begin{remark} Let us compare Corollary \ref{cor:5} with the Ax-Kochen-Ersov Transfer Theorem \cite{Ax-K}:
 given a degree $d$,
for almost all primes $p$, a form of degree $d$ over $\mathbb{Q}_p$ of dimension greater than or equal to $d^2+1$
is isotropic \cite[(7.4)]{G}.
Moreover,
for any form $\varphi$ of degree $d\geq 2$ and dimension greater than
$d^{3}$  over $\mathbb{Q}(t)$,  for almost all
primes $p$ the form $\varphi$ is isotropic over  $\mathbb{Q}_p(t)$ (\cite{Z} for $d=2$, \cite{Pu} for $d\geq 3$).
The model-theoretic proofs of both results
do not allow for a more concrete observation on which primes exactly are included here,
 nor can they be extended to other base fields.
\end{remark}

Stronger upper bounds on $u_{diag}(d,\mathbb{F}_q(t))$ will yield stronger results on its dimension,
since we only used the upper bound in the well known inequality
$d\cdot u_{diag}(d,\mathbb{F}_q) \leq u_{diag}(d,\mathbb{F}_q(t)) \leq d^2$
to prove Corollary \ref{cor:5}, for instance we obtain:

\begin{corollary}\label{cor:6}
Assume that
$u_{diag}(d,k(t))=dr<d^2$ for some $r\in\{1,\dots, d-1\}$. Let $\varphi$ be a
 diagonal form of degree $d$ and dimension $>d^2r+1$ over a function field in one variable $F=K(t)$. Then:
\\ (i) $\varphi$ is isotropic over $F_v$, for every discrete valuation $v$
with residue field either a function field in one variable over $k$ or a finite extension of $K$;
\\ (ii) $\varphi$ is isotropic over $F_U$ for each reduced, irreducible component $U\subset Y$ of the
complement of $S$ in $Y$ and  over  $F_P$ for each $P\in S$.
\end{corollary}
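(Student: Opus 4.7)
The plan is to imitate the proof of Corollary \ref{cor:5}, substituting the sharper input $u_{diag}(d,k(t))=dr$ for the $C_2$-bound $u_{diag}(d,k(t))\le d^2$ used there. I read the hypothesis as actually giving $u_{diag}(d,k'(s))\le dr$ over every function field in one variable over $k$ (with $k'$ a finite extension of $k$ and $s$ transcendental over $k'$); this extended reading is what is required when feeding the bound into Corollary \ref{cor:4}, and parallels the use of $C_2$ in Corollary \ref{cor:5}.

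For (i), let $v$ be a discrete valuation of $F=K(t)$ of the stated type. If $\kappa(v)$ is a function field in one variable over $k$, then $F_v$ is Henselian discretely valued with residue field $\kappa(v)$, so Theorem \ref{thm:4}(i) yields
\[
u_{diag}(d,F_v)=d\,u_{diag}(d,\kappa(v))\le d\cdot dr = d^2r.
\]
If instead $\kappa(v)$ is a finite extension $L$ of $K$, then $L$ is itself a complete discretely valued $p$-adic field with residue field $\ell$ a finite extension of $k$; Proposition \ref{le:7}(i) combined with the hypothesis gives $d\,u_{diag}(d,\ell)\le u_{diag}(d,\ell(s))\le dr$, so $u_{diag}(d,\ell)\le r$, and two applications of Theorem \ref{thm:4}(i) then give $u_{diag}(d,F_v)=d^2\,u_{diag}(d,\ell)\le d^2r$. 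In either case any diagonal form of dimension $>d^2r+1$ over $F$ is a fortiori isotropic over $F_v$.

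Part (ii) then follows by direct invocation of Corollary \ref{cor:4}(ii) with this value of $r$: its input hypothesis on diagonal forms over function fields in one variable over $k$ is precisely what has been granted, and the dimensional requirement $>d^2r+1$ is strictly stronger than the $>d^2r$ that Corollary \ref{cor:4}(ii) demands. The only real subtlety is the global reading of the hypothesis on $k(t)$, and once that is fixed no machinery beyond Theorem \ref{thm:4}, Proposition \ref{le:7}, and Corollary \ref{cor:4} is needed; I anticipate no genuine obstacle.
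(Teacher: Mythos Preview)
Your approach is correct and matches the paper's, which gives no explicit proof for this corollary but indicates (in the sentence preceding it) that it follows by re-running the argument for Corollary~\ref{cor:5} with the sharper input $u_{diag}(d,k(t))=dr$ in place of the $C_2$-bound $d^2$; your detailed write-up via Theorem~\ref{thm:4}, Proposition~\ref{le:7} and Corollary~\ref{cor:4} is exactly that argument spelled out. Your flag about needing the hypothesis for \emph{every} function field in one variable over $k$, not merely $k(t)$ itself, is a genuine point: the paper's statement is literally only about $k(t)$, but the invocation of Corollary~\ref{cor:4} (and already the case where $\kappa(v)$ is a general one-variable function field over $k$) requires the extended reading you describe, so this ambiguity is present in the paper as well rather than a defect of your argument.
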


It is well known  that $u_{diag}(d,K)\leq d^2$
for a $p$-adic field $K$ with residue field $k=\mathbb{F}_q$ \cite{D-L}. Indeed,
$u_{diag}(d, K)=d \, u_{diag}(d, \mathbb{F}_q)$ by Theorem \ref{thm:4},
assuming that ${\rm char}\, \mathbb{F}_q=p \nmid d!$  as before, which shows that clearly
 $u_{diag}(d,K)$ can be smaller than $d^2$. On the other hand, Artin's conjecture that $\mathbb{Q}_p$ is a $C_2$-field is false for instance for forms
of degree 4.

\smallskip
\noindent

\end{document}